\newcommand{\IR}{{\mathbb R}}
\newcommand\restr[2]{\ensuremath{\left.#1\right|_{#2}}}
\theoremstyle{plain}
\newtheorem{thm}{Theorem}[section]
\newtheorem{cor}[thm]{Corollary}
\newtheorem{lem}[thm]{Lemma}
\newtheorem{prop}[thm]{Proposition}
\newtheorem{fact}[thm]{Fact}
\newtheorem{facts}[thm]{Facts}
\newtheorem{rem}[thm]{Remark}
\newtheorem{Mthm}[MainThm]{Theorem}
\newtheorem{Mcor}[MainThm]{Corollary}
\newtheorem{Mprop}[MainThm]{Proposition}
\newtheorem{Mlem}[MainThm]{Lemma}
\theoremstyle{definition}
\newtheorem{example}[thm]{Example}
\newtheorem{defn}[thm]{Definition}
\newtheorem{ques}{Question}
\theoremstyle{plain} 
\newaliascnt{cor}{thm}
\newtheorem{cor}[cor]{Corollary}
\crefname{cor}{corollary}{corollaries}
\Crefname{cor}{Corollary}{Corollaries}
\newaliascnt{lem}{thm}
\crefname{lem}{lemma}{lemmas}
\Crefname{lem}{Lemma}{Lemmas}
\newaliascnt{prop}{thm}
\crefname{prop}{proposition}{propositions}
\Crefname{prop}{Proposition}{Propositions}
\newaliascnt{conj}{thm}
\crefname{conj}{conjecture}{conjectures}
\Crefname{conj}{Conjecture}{Conjectures}
\crefname{ques}{question}{questions}
\Crefname{ques}{Question}{Questions}
\newaliascnt{fact}{thm}
\crefname{fact}{fact}{facts}
\Crefname{fact}{Fact}{Facts}
\newaliascnt{facts}{thm}
\crefname{facts}{fact}{facts}
\Crefname{facts}{Fact}{Facts}
\newaliascnt{rem}{thm}
\newtheorem{rem}[rem]{Remark}
\crefname{rem}{remark}{remarks}
\Crefname{rem}{Remark}{Remarks}
\theoremstyle{definition} 
\newaliascnt{example}{thm}
\crefname{example}{example}{examples}
\Crefname{example}{Example}{Examples}
\newaliascnt{defn}{thm}
\crefname{defn}{definition}{definitions}
\Crefname{defn}{Definition}{Definitions}
\crefname{clai}{claim}{claims}
\Crefname{clai}{Claim}{Claims}
\theoremstyle{plain} 
\newaliascnt{Mthm}{MainThm}
\crefname{Mthm}{theorem}{theorems}
\Crefname{Mthm}{Theorem}{Theorems}
\newaliascnt{Mcor}{MainThm}
\crefname{Mcor}{corollary}{corollaries}
\Crefname{Mcor}{Corollary}{Corollaries}
\newaliascnt{Mlem}{MainThm}
\crefname{Mlem}{lemma}{lemmas}
\Crefname{Mlem}{Lemma}{Lemmas}
\newaliascnt{Mprop}{MainThm}
\crefname{Mprop}{proposition}{propositions}
\Crefname{Mprop}{Proposition}{Propositions}
\theoremstyle{plain} 
\crefname{Ithm}{theorem}{theorems}
\Crefname{Ithm}{Theorem}{Theorems}
\crefname{Icor}{corollary}{corollaries}
\Crefname{Icor}{Corollary}{Corollaries}
\crefname{Ilem}{lemma}{lemmas}
\Crefname{Ilem}{Lemma}{Lemmas}
\crefname{Iprop}{proposition}{propositions}
\Crefname{Iprop}{Proposition}{Propositions}
\theoremstyle{definition} 
\crefname{Idef}{definition}{definitions}
\Crefname{Idef}{Definition}{Definitions}
\numberwithin{equation}{section}
\def\a{\alpha}
\def\d{\delta}
\def\k{\kappa}
\def\s{\sigma}
\def\a{\alpha}
\def\d{\delta}
\def\k{\kappa}
\def\s{\sigma}
\def\RR{{\mathbb R}}
\def\d{{\mathrm{d}}}
\def\R{{R_{\alpha}}}
\def\SS{{\mathbb{S}}}
\newcommand{\gc}{\gamma}
\newcommand{\dgc}{\dot{\gc}}
\newcommand{\vast}{\bBigg@{2}}
\newcommand{\Vast}{\bBigg@{5}}
\newcommand{\RNum}[1]{\uppercase\expandafter{\romannumeral #1\relax}}
\title[Growth rate of prime periodic magnetic geodesics]{The growth rate of closed prime magnetic geodesics\\ on closed contact manifolds} 
\author{Lina Deschamps}
\address{Faculty of Mathematics and Computer Science,
	University of Heidelberg,
	Im Neuenheimer Field 205,
	69120 Heidelberg, Germany}
\email{ldeschamps@mathi.uni-heidelberg.de}
\author{Levin Maier}
\address{Faculty of Mathematics and Computer Science,
	University of Heidelberg,
	Im Neuenheimer Field 205,
	69120 Heidelberg, Germany}
\email{lmaier@mathi.uni-heidelberg.de}
\author{Tom Stalljohann}
\address{Faculty of Mathematics and Computer Science,
	University of Heidelberg,
	Im Neuenheimer Field 205,
	69120 Heidelberg, Germany}
\email{tstalljohann@mathi.uni-heidelberg.de}
\keywords{}
\subjclass[2020]{37J45, 37J55, 53D25}
\begin{document}
	\maketitle
\renewcommand{\abstractname}{Abstract}
	\begin{abstract}
In this paper, we prove that for any given closed contact manifold, there exists an infinite-dimensional space of Riemannian metrics which can be identified with the space of bundle metrics on the induced contact distribution. 
For each such metric, and for all energy levels, the number of periodic orbits of the corresponding magnetic geodesic flow grows at least as fast as the number of geometrically distinct periodic Reeb orbits of period less than~$t$.

As a corollary, we deduce that for every closed $3$–manifold which is not a \emph{graph manifold}, there exists an open $C^1$–neighborhood of the set of nondegenerate contact forms such that for each contact form in this neighborhood, there exists an infinite-dimensional space of Riemannian metrics as above. 
For the corresponding magnetic systems, the number of prime closed magnetic geodesics grows at least exponentially on all energy levels. 
Consequently, the restriction of the magnetic geodesic flow to any energy surface has positive topological entropy.
	\end{abstract}	
\section{Introduction}
 
The problem of counting closed geodesics has historically 
been a major driving force in geometry and the calculus of variations, 
particularly in establishing results on their growth rate with respect to length. This classical question has also sparked growing interest in the study of the growth rates of Reeb flows and magnetic geodesic flows, 
both of which can be regarded as natural generalizations of the geodesic flow.

\subsection*{Reeb flows}
Let $M$ be a closed manifold of dimension $\dim M = 2n + 1$, equipped with a one-form $\alpha \in \Omega^1(M)$ such that $\alpha \wedge (\mathrm{d}\alpha)^n$ is a volume form.  
Then $(M,\alpha)$ is called a closed contact manifold.  
There exists a unique vector field $\R \in \Gamma(TM)$ satisfying
\begin{equation}\label{eq: def Reeb vector field}
    \alpha(\R) = 1, 
\qquad 
\mathrm{d}\alpha(\R, \cdot) = 0,
\end{equation}
called the \emph{Reeb vector field} of $(M,\alpha)$.  
Its flow, denoted by $\varPhi_{\R}^t$, is called the \emph{Reeb flow} of $(M,\alpha)$.  
We denote by 
$\mathcal{R}_t(M,\alpha)$ the number of geometrically distinct Reeb orbits of period less than $t$.\\ Before moving on, note that a natural subclass of Reeb flows consists of the geodesic flows of Finsler metrics $F$ on $M$, since, after a possible reparametrization, the geodesic flow on the unit sphere bundle $S_FM$ is precisely the Reeb flow of the Hilbert contact form, where the geodesic spray is the Reeb vector field; see, for example,~\cite{Gg08} for further details.

\subsection*{Magnetic geodesic flows}
We present the mathematical framework used to study the dynamics of a charged particle in the presence of a magnetic field, following V. Arnold's pioneering approach~\cite{ar61}.
    
Let $(M,g)$ be a closed, connected Riemannian manifold and $\sigma\in\Omega^2(M)$ be a closed two-form. The form $\sigma$ is called \emph{magnetic field} and $(M,g,\sigma)$ a \emph{magnetic system}. This triple determines the skew-symmetric bundle endomorphism $Y\colon TM\to TM$, the \emph{Lorentz force}, by
\begin{equation}\label{e:Lorentz}
    g_q\left(Y_qu,v\right)=\sigma_q(u,v),\qquad \forall\, q\in M,\ \forall\,u,v\in T_qM.
\end{equation}
We call a smooth curve $\gc\colon \RR\to M$ a \emph{magnetic geodesic} of $(M,g,\sigma)$ if it satisfies \begin{equation}\label{e:mg}
		\nabla_{\dgc}\dgc= Y_{\gc}\dgc
\end{equation}
where $\nabla$ denotes the Levi-Civita connection of the metric $g$. The equation~\eqref{e:mg} reduces to the geodesic equation \(\nabla_{\dot{\gamma}} \dot{\gamma} = 0\) when \(\sigma = 0\), that is, when the magnetic form vanishes.

Like standard geodesics, magnetic geodesics have constant kinetic energy $E(\gamma,\dot\gamma):=\tfrac12g_\gamma(\dot\gamma,\dot\gamma)$, and hence travel at constant speed $|\dot\gamma|:=\sqrt{g_\gamma(\dot\gamma,\dot\gamma)}$;  since the Lorentz force $Y$ is skew-symmetric.

This conservation of energy reflects the Hamiltonian nature of the system. Indeed, the \emph{magnetic geodesic flow} is defined on the tangent bundle by
\[
\varPhi_{g,\sigma}^t\colon TM\to TM,\quad (q,v)\mapsto \left( \gc_{q,v}(t),\dgc_{q,v}(t)\right),\quad \forall t\in\RR,
\]
where $\gc_{q,v}$ is the unique magnetic geodesic with initial condition $(q,v)\in TM$. For more details on this and magnetic system in general we refer to \Cref{ss: Intermezzo magnetic systems}.

However, a key difference from standard geodesics is that magnetic geodesics with different speeds are not mere reparametrizations of unit-speed magnetic geodesics. This can be seen, for instance, from the fact that the left-hand side of \eqref{e:mg} scales quadratically with the speed, while the right-hand side of \eqref{e:mg} scales only linearly. Therefore, one of the main points of interest is to work out the similarities and differences between standard and magnetic geodesics by varying the kinetic energy.\\

Since magnetic geodesics cannot, in general, be reparametrized to unit speed, 
one must keep track of the energy and consider the growth rate of prime closed magnetic geodesics at a fixed energy level $\k\in(0,\infty)$. 
We call two magnetic geodesics $\gamma_1$ and $\gamma_2$ of $(M,g,\sigma)$ \textit{geometrically equivalent} if $\gamma_2(t) = \gamma_1 (\, t \, + \tau)$ for some $\tau \in \IR \,$.
For fixed energy $\kappa \in (0,\infty)$ we denote by
\[
    \mathfrak{P}^{\kappa}(M,g,\sigma)
    :=
    \bigl\{
       [ \gamma ] \;\big|\;
        \gamma \text{ is a closed prime magnetic geodesic of } (M,g,\sigma)
        \text{ with energy } \kappa
    \bigr\}
\]
the set of geometrical equivalence classes of closed prime magnetic geodesics of $(M,g,\sigma)$ with energy $\kappa \,$.
We then define the \emph{growth function} by
\[
    \mathcal{P}_t^{\kappa}(M,g,\sigma)
    :=
    \#\Bigl\{
        [\gamma] \in \mathfrak{P}^{\kappa}(M,g,\sigma)
        \;\Big|\;
        \ell(\gamma) < t
    \Bigr\} \, ,
\]
where $\ell(\gamma)$ denotes the $g$-length of the closed curve $\gamma \,$.

\subsection{Main result}

With this notation in place, we are now ready to state the main theoretical advance of this work.

\begin{thm}\label{Thm: growth rate of magnetic geodesics on contact mnfds}
Let \((M, \alpha)\) be a closed contact manifold with Reeb vector field \(\R\).  
We define an infinite-dimensional space \(\mathcal{G}\) of Riemannian metrics on $M$, namely the space of all Riemannian metrics \(g\) such that
\[
    g(\R, \R) = 1 
    \quad \text{and} \quad 
    \R \perp_g \ker(\alpha) \, .
\]
Then, for each \(g \in \mathcal{G}\), the exact magnetic system \((M, g, \mathrm{d}\alpha)\) satisfies the following properties:
\begin{enumerate}[label=(\alph*)]
    \item\label{it: 1 growth magnetic geodesics on contact mnfds}
    Every constant reparametrization of a Reeb orbit of $(M, \alpha)$ is both a geodesic of $(M, g)$ and a magnetic geodesic of $(M, g, \mathrm{d}\alpha)$.

    \item\label{it: 2 growth magnetic geodesics on contact mnfds}
    In particular, for every energy level $\kappa \in (0, \infty)$, 
    the magnetic geodesic flow of $(M, g, \mathrm{d}\alpha)$ admits at least as many embedded periodic orbits 
    as the Reeb flow of $(M, \alpha)$ admits Reeb orbits; that is,
    \[
        \mathcal{P}_t^{\kappa}(M, g, \mathrm{d}\alpha) 
        \;\geq\;
        \mathcal{R}_t(M, \alpha)
        \quad 
        \forall\, \kappa, t \in (0, \infty) \, .
    \]

    \item\label{it: 3 growth magnetic geodesics on contact mnfds}
    If, in addition, the Reeb flow admits at least one closed Reeb orbit, then the Mañé critical value $c(M,g,\d\a)$ defined as in \eqref{eq: strict mane value} of the magnetic system $(M, g, \mathrm{d}\alpha)$ is given by
    \[
        c(M, g, \mathrm{d}\alpha) 
        = \tfrac{1}{2} \, \Vert \alpha \Vert_{\infty}^2 
        := \tfrac{1}{2} \max_{x \in M} |\alpha_x|_g^2 
        = \tfrac{1}{2} \, .
    \]
\end{enumerate}
\end{thm}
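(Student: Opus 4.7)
The whole argument rests on one algebraic observation. The two hypotheses defining $\mathcal{G}$---namely $g(\R,\R)=1$ and $\R\perp_g\ker\alpha$---are jointly equivalent to the single pointwise identity $\alpha(\cdot)=g(\R,\cdot)$: the Reeb vector field is the $g$-metric dual of the contact form. In particular $|\alpha|_g\equiv 1$ on $M$. I would record this equivalence at the start, since all three assertions then reduce to short direct computations.

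For part \ref{it: 1 growth magnetic geodesics on contact mnfds} the plan is to establish the two vanishing identities $Y(\R)=0$ and $\nabla_{\R}\R=0$. The first is immediate from $\mathrm{d}\alpha(\R,\cdot)=0$ combined with \eqref{e:Lorentz}. For the second, metric compatibility and $g(\R,\R)\equiv 1$ force $\nabla_{\R}\R\perp_g\R$, so it suffices to test against $Z\in\ker\alpha$; Koszul's formula then collapses to $g(\nabla_{\R}\R,Z)=-\alpha([\R,Z])$, which equals $\mathrm{d}\alpha(\R,Z)$ by Cartan's formula and hence vanishes by the Reeb property. A constant reparametrization $\gamma(t)=\varPhi_{\R}^{ct}(p)$ of a Reeb orbit then satisfies both the geodesic and magnetic geodesic equations, since the two sides of \eqref{e:mg} rescale by $c^2$ and $c$ respectively, while remaining zero.

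Part \ref{it: 2 growth magnetic geodesics on contact mnfds} becomes a direct corollary. Fix $\kappa>0$. Each prime closed Reeb orbit of period $T$, after constant rescaling to speed $\sqrt{2\kappa}$, yields a prime closed magnetic geodesic at energy $\kappa$ with the same underlying image and hence the same $g$-length, equal to $T$ (using $|\R|_g=1$). Geometric distinctness (up to time translation) is preserved under such reparametrization, so the counting inequality $\mathcal{P}_t^\kappa(M,g,\mathrm{d}\alpha)\ge\mathcal{R}_t(M,\alpha)$ is immediate.

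For part \ref{it: 3 growth magnetic geodesics on contact mnfds}, the equality $\tfrac12\|\alpha\|_\infty^2=\tfrac12$ is immediate from $|\alpha|_g\equiv 1$, and the upper bound $c(M,g,\mathrm{d}\alpha)\le\tfrac12$ follows by evaluating the variational formula \eqref{eq: strict mane value} at the trivial potential $u=0$. For the matching lower bound I would use any closed Reeb orbit $\gamma$ of period $T$: since $\int_\gamma \mathrm{d}u=0$ and $\int_\gamma\alpha=T=\ell_g(\gamma)$, the pairing estimate gives
\[
 T \;=\; \int_\gamma(\alpha+\mathrm{d}u) \;\le\; \sup_{M}|\alpha+\mathrm{d}u|_g\cdot\ell_g(\gamma) \;=\; T\,\sup_{M}|\alpha+\mathrm{d}u|_g,
\]
hence $\sup_{M}\tfrac12|\alpha+\mathrm{d}u|_g^2\ge\tfrac12$ for every $u$, and taking the infimum yields $c\ge\tfrac12$. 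I expect the principal care point to be matching conventions: depending on the precise version of the strict Mañé critical value adopted in \eqref{eq: strict mane value} (contractible loops, zero-homologous loops, lifts to a cover), one may need to apply the above estimate on a suitable covering of $M$ and invoke the standard relation between $c$ and its analogues on covers.
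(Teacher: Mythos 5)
Your proposal is correct and follows essentially the same route as the paper: the key facts are the metric duality $\alpha=g(\R,\cdot)$, the vanishing $Y\R=0$ from $\d\alpha(\R,\cdot)=0$, the geodesibility $\nabla_\R\R=0$, and the computation of $c$ via the dual formula $\inf_u\sup_q\tfrac12|\alpha_q+\d u_q|_g^2$. The only difference is that you prove directly (via Koszul's formula and the Cauchy--Schwarz estimate on a closed Reeb orbit) what the paper outsources to Sullivan/Geiges and to \cite[Lemma~3.1, Thm.~B]{DMS25Contact}; and your closing worry about loop classes is moot here, since the paper's definition \eqref{eq: strict mane value} ranges over all closed loops, so the closed Reeb orbit is admissible as is.
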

\begin{rem}\label{rem: bundle metric equal G}
   The space of Riemannian metrics $\mathcal{G}$ in \Cref{Thm: growth rate of magnetic geodesics on contact mnfds} can be identified with the space of bundle metrics $\mathrm{Met}(\xi)$ on the contact distribution $\xi := \ker \alpha$, where we refer to \Cref{s: proof of main thm} for details.
\end{rem}

\begin{rem}
   Moreover, if the Reeb orbit considered in \Cref{Thm: growth rate of magnetic geodesics on contact mnfds}\ref{it: 3 growth magnetic geodesics on contact mnfds} is null-homologous, then the strict Mañé critical value $c_0$---for which we refer, for example, to \cite[§2]{DMS25Contact} for a definition---coincides with the Mañé critical value $c$. 
    Thus, we partially recover \cite[Cor.~H]{DMS25Contact}.
\end{rem}

Before proving this statement, we first illustrate it through growth results 
for magnetic geodesics in this setting.  
Afterwards, we briefly comment on how this result relates to~\cite{DMS25Contact}.  \medskip

\subsection{Illustrations of \Cref{Thm: growth rate of magnetic geodesics on contact mnfds}} 

As already mentioned, the problem of counting closed prime geodesics has historically 
been a driving force in geometry and in the calculus of variations, 
particularly in establishing results concerning their growth rate with respect to length. 
It is therefore natural to investigate this question in the context of magnetic systems as well. 
This will be the main application of \Cref{Thm: growth rate of magnetic geodesics on contact mnfds}; 
see \Cref{cor: growth rate magnetic systems on Sd} and \Cref{cor: exponetial growth on 3d contact manifolds}. 
We begin our discussion with the case of the three-sphere.

\begin{cor}\label{cor: growth rate magnetic systems on Sd}
    Let $M \in \{T^1S^2, S^3\}$, where in the case of $T^1S^2$, 
    the contact form $\alpha$ is the Hilbert contact form 
    of a reversible Finsler metric on $\SS^2$,
    and in the case of $S^3$, it satisfies the assumptions of 
    \cite[Thm.~1.5]{albach2025quadraticgrowthgeodesicstwosphere}. \\[0.5em]
    Then, for each Riemannian metric $g \in \mathcal{G}$, with $\mathcal{G}$ as in 
    \Cref{Thm: growth rate of magnetic geodesics on contact mnfds}, 
    the growth function $\mathcal{P}_t^{\kappa}(M, g, \d\alpha)$ grows at least quadratically in $t$ 
    for every energy level $\kappa \in (0,\infty)$; that is,
    \[
        \liminf_{t \to \infty} 
        \frac{\log \bigl(\mathcal{P}_t^{\kappa}(M, g, \d\alpha)\bigr)}{\log(t)} 
        \;\geq\; 2 \qquad \forall\, \kappa \in (0,\infty)\, .
    \]
\end{cor}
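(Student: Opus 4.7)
The plan is to reduce the counting of closed prime magnetic geodesics to the counting of geometrically distinct Reeb orbits via \Cref{Thm: growth rate of magnetic geodesics on contact mnfds}\ref{it: 2 growth magnetic geodesics on contact mnfds}, and then to invoke quadratic-growth statements for Reeb orbits available in each of the two settings.

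The first step will be to apply part~\ref{it: 2 growth magnetic geodesics on contact mnfds} of \Cref{Thm: growth rate of magnetic geodesics on contact mnfds}, which gives, for every $g \in \mathcal{G}$ and every energy level $\kappa \in (0, \infty)$,
\[
    \mathcal{P}_t^{\kappa}(M, g, \d\alpha) \;\geq\; \mathcal{R}_t(M, \alpha) \qquad \forall\, t > 0.
\]
The two sides of this estimate are consistently normalized: the requirement $g(\R, \R) = 1$ in the definition of $\mathcal{G}$ implies that the $g$-length of a Reeb orbit agrees with its period, so that counting prime Reeb orbits by period matches counting prime closed magnetic geodesics by $g$-length. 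Taking logarithms and dividing by $\log(t)$, the corollary therefore reduces to showing, independently of $g$ and $\kappa$, that
\[
    \liminf_{t \to \infty} \frac{\log \mathcal{R}_t(M, \alpha)}{\log t} \;\geq\; 2.
\]

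For the case $M = T^1 \SS^2$, I would use that the Reeb flow of the Hilbert contact form of a Finsler metric $F$ on $\SS^2$ coincides (up to a constant reparametrization) with its geodesic flow, so that prime Reeb orbits are in bijection with prime closed $F$-geodesics and Reeb periods equal Finsler lengths. The desired quadratic lower bound on $\mathcal{R}_t$ then becomes the quadratic lower bound on the number of closed geodesics of a reversible Finsler metric on $\SS^2$, which is available through the Bangert--Franks theorem together with its Finsler refinements.

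For the case $M = \SS^3$, the hypothesis on $\alpha$ in the statement is precisely the one under which \cite[Thm.~1.5]{albach2025quadraticgrowthgeodesicstwosphere} yields quadratic growth of $\mathcal{R}_t(\SS^3, \alpha)$, so the conclusion follows immediately after the above reduction. The main obstacle of the corollary is therefore not the corollary itself but rather \Cref{Thm: growth rate of magnetic geodesics on contact mnfds}: once Reeb orbits are known to persist as magnetic geodesics at every energy, the present statement is a matter of collecting the appropriate quadratic-growth results on the Reeb side in each of the two geometries.
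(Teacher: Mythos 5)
Your overall route is the same as the paper's: reduce via \Cref{Thm: growth rate of magnetic geodesics on contact mnfds}\ref{it: 2 growth magnetic geodesics on contact mnfds} to a quadratic lower bound on $\mathcal{R}_t(M,\alpha)$, and then quote the relevant growth results for Reeb orbits in each of the two geometries. The normalization remark (that $g(\R,\R)=1$ makes $g$-length of a Reeb orbit equal to its period, so the two counting functions are compared in the same variable) is a correct and worthwhile detail that the paper leaves implicit. The $\SS^3$ case is handled exactly as in the paper, by citing \cite[Thm.~1.5]{albach2025quadraticgrowthgeodesicstwosphere}.

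There is, however, a genuine gap in your treatment of the $T^1\SS^2$ case. You claim that the quadratic lower bound on the number of closed geodesics of a reversible Finsler metric on $\SS^2$ ``is available through the Bangert--Franks theorem together with its Finsler refinements.'' That is not correct: Bangert--Franks gives only the \emph{existence of infinitely many} geometrically distinct closed geodesics on a Riemannian two-sphere, and the strongest classical quantitative refinement (Hingston) yields growth comparable to the prime-counting function, i.e.\ of order $t/\log t$, which is far below quadratic. No classical Finsler extension of these results supplies a quadratic lower bound either. The quadratic growth for reversible Finsler metrics on $\SS^2$ is precisely the new input the paper relies on, namely \cite[Thm.~1.1]{albach2025quadraticgrowthgeodesicstwosphere}; without invoking that result (or something of equivalent strength), the $T^1\SS^2$ half of the corollary does not follow from your argument.
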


\begin{proof}
   This follows immediately from \Cref{Thm: growth rate of magnetic geodesics on contact mnfds} \ref{it: 2 growth magnetic geodesics on contact mnfds}, 
   combined with the recent result of Albach~\cite[Thm.~1.1, Thm.~1.5]{albach2025quadraticgrowthgeodesicstwosphere}. 
\end{proof}
\begin{rem}
    From \Cref{cor: growth rate magnetic systems on Sd} it follows immediately that, 
    for each of the contact forms $\alpha$ considered therein and for every 
    $g$ in the infinite-dimensional space of Riemannian metrics $\mathcal{G}$, 
    the number of prime geodesics of $(\SS^3,g)$ of length less than $t$ 
    grows quadratically in $t$.
\end{rem}
Lastly, we derive another consequence of \Cref{Thm: growth rate of magnetic geodesics on contact mnfds} 
in light of the so-called \emph{2–or–infinity conjecture} for closed contact $3$–manifolds $(M^3,\alpha)$. 
For a precise overview we refer to \cite[§1.4.]{DMS25Contact} 
and the references therein.  

If $(M^3,\alpha)$ belongs to one of the broad classes described in 
\cite[Cor.~1.7]{cristofarogardiner2024proofhoferwysockizehndersinfinityconjecture} or in \cite[Thm.~1.2]{ColinBrokenbook}, 
then the Reeb flow of $(M^3,\alpha)$ admits infinitely many periodic orbits.  
Combining this with \Cref{Thm: growth rate of magnetic geodesics on contact mnfds} yields:

\begin{cor}\label{cor: growth and 2 infty conj}
    Let $(M^3,\alpha)$ be a closed $3$–manifold belonging to the class described in 
    \cite[Cor.~1.7]{cristofarogardiner2024proofhoferwysockizehndersinfinityconjecture} or in \cite[Thm.~1.2]{ColinBrokenbook}.  
    Let $\mathcal{G}$ denote the infinite-dimensional space of Riemannian metrics as in 
    \Cref{Thm: growth rate of magnetic geodesics on contact mnfds}. \\
    Then, for each $g \in \mathcal{G}$, the magnetic geodesic flow of $(M,g,\d\alpha)$ 
    admits infinitely many embedded periodic orbits on every energy level.
\end{cor}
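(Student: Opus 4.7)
The plan is to reduce this statement to a direct consequence of \Cref{Thm: growth rate of magnetic geodesics on contact mnfds}\ref{it: 2 growth magnetic geodesics on contact mnfds} combined with the hypothesis on $(M^3,\alpha)$. Concretely, I would first invoke the results of \cite{cristofarogardiner2024proofhoferwysockizehndersinfinityconjecture} or \cite{ColinBrokenbook} to conclude that under the stated assumptions on $(M^3,\alpha)$, the Reeb flow of $(M^3,\alpha)$ admits infinitely many geometrically distinct periodic orbits. Translating this into the growth function, this is equivalent to the statement
\[
    \mathcal{R}_t(M,\alpha) \xrightarrow[t \to \infty]{} \infty \, .
\]

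Next, I would apply \Cref{Thm: growth rate of magnetic geodesics on contact mnfds}\ref{it: 2 growth magnetic geodesics on contact mnfds} to the exact magnetic system $(M,g,\d\alpha)$ for any $g \in \mathcal{G}$. The theorem gives, for every energy $\kappa \in (0,\infty)$ and every $t \in (0,\infty)$, the bound
\[
    \mathcal{P}_t^{\kappa}(M,g,\d\alpha) \;\geq\; \mathcal{R}_t(M,\alpha) \, .
\]
Combining the two, for each fixed $\kappa \in (0,\infty)$ we obtain $\mathcal{P}_t^{\kappa}(M,g,\d\alpha) \to \infty$ as $t \to \infty$, which is exactly the assertion that the magnetic geodesic flow on the energy surface $\{E = \kappa\}$ admits infinitely many embedded periodic orbits.

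I do not expect a genuine obstacle here, since all the analytic and dynamical work has already been absorbed into \Cref{Thm: growth rate of magnetic geodesics on contact mnfds} and into the cited Reeb-dynamical results. The only point requiring a line of care is that the geometric distinctness of orbits is preserved under the correspondence of part \ref{it: 1 growth magnetic geodesics on contact mnfds} of \Cref{Thm: growth rate of magnetic geodesics on contact mnfds}, so that distinct Reeb orbits indeed yield distinct classes in $\mathfrak{P}^{\kappa}(M,g,\d\alpha)$ for every $\kappa$; this is however already encoded in the inequality of part \ref{it: 2 growth magnetic geodesics on contact mnfds} and hence requires no additional argument.
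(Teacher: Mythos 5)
Your proposal is correct and matches the paper's argument exactly: the paper likewise cites the Reeb-dynamical results to conclude that the Reeb flow admits infinitely many periodic orbits, and then combines this with \Cref{Thm: growth rate of magnetic geodesics on contact mnfds}\ref{it: 2 growth magnetic geodesics on contact mnfds} to transfer the conclusion to every energy level of the magnetic system. No gaps.
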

In fact, this result can be strengthened for generic contact forms on $3$–manifolds $M$ that are not \emph{graph manifolds}. 
In this case, we see that the magnetic geodesic flow exhibits exponential growth of prime geodesics on all energy levels, as well as positive topological entropy.
\begin{cor}\label{cor: exponetial growth on 3d contact manifolds}
    Let $M^3$ be a closed $3$–manifold that is not a graph manifold.  
    
   Then there exists an open $C^1$-neighborhood $\mathcal{U}$ of the set of nondegenerate contact forms on $M$ such that, for each $\alpha \in \mathcal{U}$, there exists an infinite-dimensional space of Riemannian metrics $\mathcal{G}$, as in \Cref{Thm: growth rate of magnetic geodesics on contact mnfds}, with the property that for each $g \in \mathcal{G}$ and for all $\kappa \in (0,\infty)$, the restriction of the magnetic geodesic flow of $(M^3,g,\d\alpha)$ to the energy surfaces $\Sigma_{\kappa}$, defined as in \eqref{eq: energy surface}, exhibits exponential growth. \\ More precisely, there exists $c > 0$ such that
    \[
        \liminf_{t \to \infty} 
        \frac{1}{t} \log \Bigl( \mathcal{P}_t^{\kappa}(M^3,g,\d\alpha) \Bigr) 
        \;\geq\; c 
        \qquad \forall\, \kappa \in (0,\infty)\, .
    \]
    In particular the toplogical entropy $h_{\mathrm{top}}(\varPhi^t_{g,\d\a})$ of the  restriction of the magnetic geodesic flow $\varPhi^t_{g,\d\a}$  onto the energy surface $\Sigma_\k$ is positive for all levels of  the energy $\k\in(0,\infty)$. 
\end{cor}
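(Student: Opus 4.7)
The idea is to reduce the magnetic orbit growth on $(M^3,g,\d\alpha)$ to the corresponding Reeb orbit growth on $(M^3,\alpha)$ via \Cref{Thm: growth rate of magnetic geodesics on contact mnfds} \ref{it: 2 growth magnetic geodesics on contact mnfds}, and then to import a known contact-topological result on the exponential growth of Reeb flows on $3$-manifolds which are not graph manifolds.

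The first step I would carry out is to isolate from the literature a robustness statement of the following form: there exists a $C^1$-open neighborhood $\mathcal{U}$ of the set of nondegenerate contact forms on $M^3$ such that, for each $\alpha\in\mathcal{U}$, one has a constant $c=c(\alpha)>0$ with
\[
    \liminf_{t\to\infty}\frac{1}{t}\log\mathcal{R}_t(M,\alpha)\;\geq\;c.
\]
Results in this direction (e.g.\ from the circle of work by Alves, Meiwes, Pirnapasov, and Colin--Dehornoy--Rechtman, based on cylindrical and knot-filtered contact homology) provide exactly this $C^1$-stable \emph{exponential homotopical growth} under the hypothesis that $M^3$ is not a graph manifold.

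With this input in hand, the second step is immediate. Pick $\alpha\in\mathcal{U}$ and let $\mathcal{G}=\mathcal{G}(\alpha)$ be the infinite-dimensional family of Riemannian metrics produced by \Cref{Thm: growth rate of magnetic geodesics on contact mnfds}. For every $g\in\mathcal{G}$ and every $\kappa\in(0,\infty)$, part \ref{it: 2 growth magnetic geodesics on contact mnfds} of that theorem gives $\mathcal{P}_t^{\kappa}(M,g,\d\alpha)\geq\mathcal{R}_t(M,\alpha)$, so the exponential lower bound transfers to the magnetic growth function with the same constant $c$.

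For the topological entropy assertion, the energy surface $\Sigma_\kappa$ is a compact smooth $3$-manifold and the restricted magnetic flow is $C^\infty$; after rescaling length to period by the factor $\sqrt{2\kappa}$, I would invoke Katok's theorem (applied to a suitable Poincaré section, where it asserts that exponential growth of periodic orbits of a $C^{1+\alpha}$ surface diffeomorphism forces a horseshoe) to convert the exponential prime-orbit growth into positive topological entropy. Alternatively, the horseshoes constructed at the level of the Reeb flow in the cited references sit inside the magnetic dynamics on every $\Sigma_\kappa$ via part \ref{it: 1 growth magnetic geodesics on contact mnfds} of the main theorem, yielding $h_{\mathrm{top}}>0$ directly. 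The main obstacle I anticipate is pinning down the precise $C^1$-open growth input for Reeb flows in the exact form required here; once this is cited correctly, the remainder is a direct application of \Cref{Thm: growth rate of magnetic geodesics on contact mnfds} together with a standard entropy lower bound.
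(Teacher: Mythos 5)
Your proposal follows essentially the same route as the paper: obtain a $C^1$--stable exponential growth statement for the Reeb flow from the non-graph-manifold hypothesis, transfer it to $\mathcal{P}_t^{\kappa}$ via \Cref{Thm: growth rate of magnetic geodesics on contact mnfds}\ref{it: 2 growth magnetic geodesics on contact mnfds}, and then deduce positive entropy. Two points of comparison are worth recording. First, the paper does not cite an exponential-growth result for Reeb flows directly: it cites \cite[Thm.~1.3]{ColinBrokenbook} for a $C^1$--open neighborhood on which the Reeb flow has \emph{positive topological entropy}, and only then converts this into exponential growth of $\mathcal{R}_t(M,\alpha)$ using Katok's theorem for $C^\infty$ flows in dimension $3$; your plan to import an ``exponential homotopical growth'' statement would work too, but the entropy-first route is what makes the $C^1$--openness come for free from a single citation. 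Second, your primary justification of the entropy claim misstates the direction of Katok's theorem: for $C^{1+\alpha}$ surface diffeomorphisms it is \emph{positive entropy} that forces a horseshoe (and hence exponential orbit growth), not the converse; exponential growth of periodic orbits does not by itself force a horseshoe. Your fallback argument, however, is correct and arguably cleaner than what the paper does: by part \ref{it: 1 growth magnetic geodesics on contact mnfds}, the section $q \mapsto (q,\sqrt{2\kappa}\,\R(q))$ embeds $M$ as a compact invariant subset of $\Sigma_\kappa$ on which the magnetic flow is a constant time-rescaling of the Reeb flow, so $h_{\mathrm{top}}(\restr{\varPhi^t_{g,\d\a}}{\Sigma_\kappa}) \geq \sqrt{2\kappa}\, h_{\mathrm{top}}(\varPhi^t_{\R}) > 0$ directly, whereas the paper instead invokes the inequality bounding entropy from below by the growth rate of periodic orbits.
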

\begin{rem}
    From \Cref{cor: exponetial growth on 3d contact manifolds} it follows immediately that, 
    for each of the contact forms $\alpha$ considered therein and for every 
    $g$ in the infinite-dimensional space of Riemannian metrics $\mathcal{G}$, 
    the number of prime geodesics of $(M^3,g)$ of length less than $t$ 
    grows exponentially in $t$.
\end{rem}
\begin{rem}
    Note also that neither \Cref{cor: growth rate magnetic systems on Sd} nor \Cref{cor: growth and 2 infty conj} automatically follows from \Cref{cor: exponetial growth on 3d contact manifolds}, since in both cases the manifolds considered are not necessarily graph manifolds, and the contact forms involved are not necessarily nondegenerate. 
\end{rem}
\begin{proof}
By~\cite[Thm.~1.3]{ColinBrokenbook}, there exists an open $C^1$–neighborhood of the set of nondegenerate Reeb
vector fields on $(M, \alpha)$ such that every Reeb vector field in this neighborhood has positive topological entropy; see~\cite{KatokHasselblatt1995} for a definition and background on topological entropy.\\
For flows in dimension~3, positive topological entropy implies that the number of periodic orbits with period less than a
positive number $t$ grows exponentially with $t$. That is, there exists a constant $c > 0$ such that
\begin{equation}\label{eq: exponential growth of periodic reeb orbits}
     \liminf_{t \to \infty} 
        \frac{1}{t} \log \bigl(\mathcal{R}_t(M, \alpha)\bigr)
        \;\geq\; c \, .
\end{equation}
Hence, \eqref{eq: exponential growth of periodic reeb orbits} in combination with \Cref{Thm: growth rate of magnetic geodesics on contact mnfds}\ref{it: 2 growth magnetic geodesics on contact mnfds} yields 
\begin{equation}\label{eq: exponential growth of prime magnetic geodesics}
      \liminf_{t \to \infty} 
        \frac{1}{t} \log \bigl(\mathcal{P}_t^{\kappa}(M^3, g, \mathrm{d}\alpha)\bigr) 
        \;\geq\; c 
        \qquad \forall\, \kappa \in (0,\infty) \, . 
\end{equation}
In particular, by classical results of Katok, Newhouse and Yomdin (see~\cite[Part.1§3]{KatokHasselblatt1995}), we have
\[
h_{\mathrm{top}}\left(\restr{\varPhi^t_{g,\sigma}}{\Sigma_\kappa}\right)
    \;\geq\;
    \liminf_{t \to \infty} 
        \frac{1}{t} \log \bigl(\mathcal{P}_t^{\kappa}(M^3, g, \mathrm{d}\alpha)\bigr)
    \qquad \forall\, \kappa \in (0,\infty) \, .
\]
This, in combination with~\eqref{eq: exponential growth of prime magnetic geodesics}, completes the proof.
\end{proof}
\vspace{0pt}
\subsection{Related results.}
Although much progress has been made over the past thirty years, the dynamics of magnetic flows $(M,g,\s)$ below the Mañé critical value $c(g,\sigma)$ are still not well understood. For example, the existence of a single periodic orbit for $\kappa < c(g,\sigma)$ is known only for almost every energy level (see \cite{AssBenLust16}). Recently, the authors obtained various results on multiplicity in \cite{DMS25Contact}, which were previously available only when $M$ is two-dimensional (see \cite{AbbMacMazzPat17, AsselleMazzucchelli2019, AsselleBenedetti2022, GinzburgGurel2009, GinzburgGurelMacarini2015}) or in a few special cases (e.g.\ \cite{ABM, AsselleMazzucchelli2019Waist, Kerman1999}). The aforementioned multiplicity results in~\cite{DMS25Contact} concern only the existence of finitely many closed prime magnetic geodesics for all energy levels. 

In contrast, this paper contributes to this question by showing, in \Cref{cor: growth and 2 infty conj}, \Cref{cor: growth rate magnetic systems on Sd}, and \Cref{cor: exponetial growth on 3d contact manifolds}, the existence of infinitely many prime periodic magnetic geodesics for all energy levels.
In the case of \Cref{cor: exponetial growth on 3d contact manifolds}, this result can be sharpened to establish exponential growth of closed prime magnetic geodesics on all energy levels. To the best of the author’s knowledge, this represents the first result of its kind in this direction.\newline

\textbf{Relation to \cite{DMS25Contact}}
We begin by noting that the space of Riemannian metrics considered in 
\Cref{Thm: growth rate of magnetic geodesics on contact mnfds} 
is more rigid and smaller than the space of metrics used in the analogous statements of 
\cite{DMS25Contact}.  
Indeed, by the construction in \cite[§ 4.3-§ 4.4]{DMS25Contact}, 
those metrics are only locally prescribed in a small neighborhood of the Reeb orbits, 
while away from the Reeb orbits they can be chosen completely arbitrarily; 
see \cite[Rem. 1.6]{DMS25Contact}
for a precise discussion.  
Consequently, the metrics constructed there do not necessarily satisfy 
\eqref{eq: metric so that each reeb orbit is geodesic.}.  
It remains unclear whether the construction of \cite{DMS25Contact} 
allows one, in the case where the Reeb flow of a closed contact manifold $(M,\alpha)$ 
admits infinitely many periodic orbits, to find a Riemannian metric $g$ such that 
\emph{all} Reeb orbits are magnetic geodesics of $(M,g,\d\alpha)$.  
This issue is closely related to the next point.  

On the other hand, by restricting our attention to the smaller class of Riemannian metrics 
in \Cref{Thm: growth rate of magnetic geodesics on contact mnfds}, 
we obtain stronger dynamical statements for the resulting magnetic systems.  
For instance, compare \Cref{cor: growth rate magnetic systems on Sd} and 
\Cref{cor: growth and 2 infty conj} with the analogous statement in 
\cite{DMS25Contact}, see in particular \cite[Cor. J.2.]{DMS25Contact}.\\

\noindent
\textbf{Acknowledgments:}
The authors thank all participants of the symplectic geometry seminar in Heidelberg, especially P.~Albers, for their valuable comments on related work. We are also grateful to F.~Schlenk for helpful discussions on dynamical richness, to R.~Siefring for many stimulating conversations on Reeb dynamics and pseudoholomorphic curves, and to B.~Albach for sharing unpublished work and for many helpful discussions on the quadratic growth of closed geodesics.
\\
The authors acknowledge funding from the Deutsche Forschungsgemeinschaft (DFG, German Research Foundation) – 281869850 (RTG 2229), 390900948 (EXC-2181/1), and 281071066 (TRR 191). L.D. and L.M. gratefully acknowledge the excellent working conditions and the stimulating interactions at the Erwin Schrödinger International Institute for Mathematics and Physics in Vienna during the thematic programme “Infinite-dimensional Geometry: Theory and Applications”, where part of this work was carried out. L.M. also acknowledges the University of Neuchâtel for their warm hospitality, where part of this work was completed.
\section{Preliminaries: magnetic systems and Ma\~n\'e's critical value}
\subsection{Intermezzo: magnetic systems}\label{ss: Intermezzo magnetic systems} 
For a given magnetic system $(M,g,\sigma)$, we already mentioned that the conservation of energy for magnetic geodesics of $(M,g,\sigma)$ reflects the Hamiltonian nature of the system. Indeed, recall that the \emph{magnetic geodesic flow} is defined on the tangent bundle by
\[
\varPhi_{g,\sigma}^t \colon TM \to TM, \quad (q,v) \mapsto \left( \gc_{q,v}(t), \dgc_{q,v}(t) \right), \quad \forall t \in \RR,
\]
where $\gc_{q,v}$ is the unique magnetic geodesic with initial condition $(q,v) \in TM$.
As shown in~\cite{Gin}, this flow is Hamiltonian with respect to the kinetic energy $E \colon TM \to \RR$ and the twisted symplectic form
\[
\omega_\sigma = \d\lambda - \pi_{TM}^*\sigma,
\]
where $\lambda$ is the metric pullback of the canonical Liouville $1$-form from $T^*M$ to $TM$ via the metric $g$, and $\pi_{TM} \colon TM \to M$ is the basepoint projection. For $\sigma = 0$, we recover in this picture the Hamiltonian formulation of the geodesic flow of $(M,g)$.

Furthermore, by its Hamiltonian nature, the level sets of the kinetic Hamiltonian are invariant under the magnetic geodesic flow \(\varPhi^t_{g,\sigma}\). More precisely:

For a general magnetic system, the zero section corresponds to the set of rest points of the flow. For all energy levels \( \kappa \in (0, \infty) \), all of which are regular values of the kinetic Hamiltonian \( E \), the corresponding hypersurfaces
\begin{equation}\label{eq: energy surface}
    \Sigma_{\kappa} := E^{-1}(\kappa),
\end{equation}
called the \emph{energy surfaces}, are invariant under the magnetic geodesic flow.
Thus, the dynamics of \(\varPhi^t_{g,\sigma}\) restrict to each energy surface \(\Sigma_\kappa\).

In the case of an exact magnetic system $(M,g,\d\alpha)$, that is, when the magnetic field is an exact two-form, the magnetic geodesic flow admits a Lagrangian formulation, and thus also a variational formulation: The corresponding magnetic Lagrangian is
\begin{align*}
    L \colon TM \to \mathbb{R}, \quad L(q, v) := \tfrac{1}{2} |v|^2 - \alpha_q(v) .
\end{align*}
The magnetic geodesic flow \( \Phi^t_{g,\sigma} \) coincides with the Euler--Lagrange flow \( \Phi_L \) associated with the magnetic Lagrangian $L$, see~\cite{Gin}. That is, a curve \( \gamma \colon [0, T] \to M \) is a magnetic geodesic if and only if it is a critical point of the action functional \( S_L \)
\[
S_L(\gamma) := \int_0^T L(\gamma(t), \dot\gamma(t))\,\mathrm{d}t
\]
among all curves \( \delta \colon [0, T] \to M \) with \( \delta(0) = \gamma(0) \) and \( \delta(T) = \gamma(T) \).

This variational principle prescribes the length \( T \) of the time interval but leaves the energy of \( \gamma \) free. On the other hand, for any given energy level \( \kappa \in \mathbb{R} \), the curve \( \gamma \) is a magnetic geodesic with energy \( \kappa \) if and only if it is a critical point of the action functional \( S_{L+\kappa} \) among all curves \( \delta \colon [0, T'] \to M \) such that \( \delta(0) = \gamma(0) \) and \( \delta(T') = \gamma(T) \), for some arbitrary \( T' > 0 \).
\subsection{Mañé's critical value.} This variational formulation underlies the definition of the \emph{Mañé's critical value}, introduced in the seminal works~\cite{CIPP,Man}. This quantity can be interpreted as energy level marking significant dynamical and geometric transitions in the Euler--Lagrange flow induced by the magnetic Lagrangian \( L \). \\

The \emph{Mañé critical value} is 
\begin{equation}\label{eq: strict mane value}
    c(L) := 
    \inf\left\{ \kappa \in \mathbb{R} \,\middle|\, S_{L+\kappa}(\gamma) \geq 0 \,\,\, \forall\, T > 0,\ \forall\, \gamma \in C^\infty(\mathbb{R}/T\mathbb{Z}, M) \right\}
\end{equation}

It can also be defined geometrically, see~\cite{CIPP}, as the smallest energy value containing the graph of an exact one-form on $M$:
\begin{equation}\label{d:mane1}
	c(L) = \inf_{f} \sup_{q\in M} H(q,\d f_q),
\end{equation}
where the infimum is taken over all exact one-forms $\d f$ on $M$ and $H$ is the magnetic Hamiltonian given by the Legendre dual of $L$, that is
\[
H \colon T^*M \to \mathbb{R}, \quad H(q,p) := \tfrac{1}{2}\lvert p + \alpha_q \rvert^2_q \, .\]
For $\k > c(L)$, the level set $H^{-1}(\kappa)$ encloses the Lagrangian graph $\mathrm{gr}(\d f)$ for some exact one-form $\d f$ and is therefore non-displaceable by Gromov’s theorem~\cite{Gr85}. Hence also the energy hypersurface $\Sigma_\kappa = E^{-1}(\kappa)$ is non-displaceable.\\

Finally, we note that the Mañé critical value can also be defined for non-exact magnetic fields, following the works~\cite{CFP10,Me09}, though this generalization lies beyond the scope of this paper. 
\section{The Proof of \Cref{Thm: growth rate of magnetic geodesics on contact mnfds}.}\label{s: proof of main thm}

Given the contact manifold $(M,\a) \,$, with contact structure $\xi = \ker \alpha $ and Reeb vector field $\R$.
For the reader’s convenience, we recall that $\mathcal{G}$ is the space of Riemannian metrics on $M$ satisfying
\begin{equation}\label{eq: metric so that each reeb orbit is geodesic.}
    g(\R,\R) = 1 
    \quad \text{and} \quad 
    \R \perp_g \ker(\alpha) \,\, .
\end{equation}
First we notice that there is a bijection between $\mathcal{G}$ and the space $\mathrm{Met}(\xi)$ of smooth bundle metrics on $\xi \,$. Indeed, any smooth bundle metric $\widetilde{g} \in \mathrm{Met}(\xi)$ extends uniquely to a smooth bundle metric $g$ on $TM$ satisfying \eqref{eq: metric so that each reeb orbit is geodesic.}. This yields a map $\mathrm{Met}(\xi) \rightarrow \mathcal{G}$ whose inverse is given by restricting a metric in $\mathcal{G}$ to $\xi$. \\
Since $\mathrm{Met}(\xi)$ is infinite dimensional (and actually a convex cone), we see that $\mathcal{G}$ is infinite dimensional. (Which in fact proves \Cref{rem: bundle metric equal G}.)\\

Next a result of Sullivan~\cite{Sullivan1978} states that the Reeb flow of $R_\a$ is \emph{geodesible}, i.e.\ there exists a Riemannian metric $g$ on $M$ with respect to which all Reeb orbits are unit-speed geodesics of $(M,g)$.  
In fact, any Riemannian metric $g \in \mathcal{G}$ has this property. For a simplified proof of this statement, we refer the reader to~\cite[Prop.~3.3]{Geiges2022}.  

For each $g \in \mathcal{G}$, by it's definition (see~\eqref{e:Lorentz}), the Lorentz force $Y$ of the magnetic system $(M,g,\d\alpha)$ vanishes on $\ker \d\alpha$, which is, by~\eqref{eq: def Reeb vector field}, precisely the line bundle $\langle R_\alpha \rangle$ spanned by the Reeb vector field $\R$. Thus, the Lorentz force $Y$ of $(M,g,\d\alpha)$ vanishes on $\langle R_\alpha \rangle$. \\
Combining this with the fact that any Reeb orbit $\gamma$ of $\R$ is also a geodesic of $(M,g)$, we conclude that
\[
    \nabla_{\dot{\gamma}} \dot{\gamma} = 0 = Y_{\gamma}\dot{\gamma} \,\, .
\]
Hence every Reeb orbit $\gamma$ is simultaneously a geodesic of $(M,g)$ and a magnetic geodesic of $(M,g,\d\alpha)$. Additionally, as integral curve of the nowhere vanishing vector field $R_\alpha \,$, the Reeb orbit $\gamma$ is automatically embedded. 
Thus $\gamma$ is a magnetic geodesic of geodesic type of $(M,g,\d\alpha)$ in the sense of 
\cite[Def. 1.1.]{DMS25Contact}.\\

Statements~\ref{it: 1 growth magnetic geodesics on contact mnfds} and 
\ref{it: 2 growth magnetic geodesics on contact mnfds} then follow directly from 
\cite[Lemma. 3.1.]{DMS25Contact}, using also that constant reparametrizations of magnetic geodesics of geodesic type are again magnetic geodesics of geodesic type.

Statement \ref{it: 3 growth magnetic geodesics on contact mnfds} 
follows from a computation 
along the lines of the proof of \cite[Thm. B]{DMS25Contact}, 
together with the fact that, by~\eqref{eq: metric so that each reeb orbit is geodesic.}, $\alpha$ is the $g$-metric dual of $\R$ and thus 
the $g$-norm of $\alpha$ attains its maximum along Reeb orbits of $\R$, where it is equal to~1. Which finishes the proof.

\bibliographystyle{abbrv}
	\bibliography{ref}
\end{document}